\documentclass[11pt]{article}
\usepackage{enumerate}
\usepackage{amssymb,a4wide,latexsym,makeidx,epsfig,fleqn}
\usepackage{amsthm}
\usepackage{amsmath}
\usepackage{enumerate}
\usepackage{extarrows}
\usepackage{graphicx}
\usepackage{subfigure}
\usepackage{float}
\usepackage{cleveref}
\usepackage[justification=centering]{caption}
\newtheorem{theorem}{Theorem}[section]

\newtheorem{lemma}[theorem]{Lemma}

\begin{document}
\textwidth 150mm \textheight 225mm
\title{On the $D_\alpha$ spectral radius of non-transmission regular graphs
\thanks{Supported by the National Natural Science Foundation of China (Nos. 12001434 and 12271439).}}
\author{Zengzhao Xu$^{a}$, Weige Xi$^{a}$\footnote{Corresponding author.}, Ligong Wang$^{b}$\\
{\small $^{a}$ College of Science, Northwest A\&F University, Yangling, Shaanxi 712100, P.R. China}\\
{\small $^{b}$ School of Mathematics and Statistics, Northwestern Polytechnical University,}\\
{\small  Xi'an, Shaanxi 710129, P.R. China}\\
{\small E-mail: xuzz0130@163.com; xiyanxwg@163.com; lgwangmath@163.com}\\}

\date{}
\maketitle
\begin{center}
\begin{minipage}{120mm}
\vskip 0.3cm
\begin{center}
{\small {\bf Abstract}}
\end{center}
{\small  Let $G$ be a connected graph with order $n$ and size $m$. Let $D(G)$ and $Tr(G)$ be
the distance matrix and diagonal matrix with vertex transmissions of $G$, respectively.
For any real $\alpha\in[0,1]$, the generalized distance matrix $D_\alpha(G)$ of $G$ is defined as
$$D_\alpha(G)=\alpha Tr(G)+(1-\alpha)D(G).$$
The largest eigenvalue of $D_{\alpha}(G)$ is called the $D_{\alpha}$ spectral radius or generalized distance spectral radius of $G$, denoted by $\mu_{\alpha}(G)$. In this paper, we establish a lower bound on the difference between the maximum vertex transmission and the $D_\alpha$ spectral radius of non-transmission regular graphs, and we also characterize the extremal graphs attaining the bound.

\vskip 0.1in \noindent {\bf Key Words}: \ Generalized distance matrix, Spectral radius, Non-transmission regular graph, Lower bound. \vskip
0.1in \noindent {\bf AMS Subject Classification (2020)}: \ 05C50,15A18}
\end{minipage}
\end{center}

\section{Introduction }
\label{sec:ch6-introduction}

Let $G=(V(G),E(G))$ be a graph with vertex set $V(G)=\{v_1,v_2,$ $\ldots, v_n\}$ and edge set $E(G)$. The number of vertices $|V(G)|=n$ is the order of $G$ and the number of edges $|E(G)|=m$ is the size of $G$. For any vertex $v_i\in V(G)$, $N(v_i)=\{v_j : (v_i,v_j)\in E(G) \}$ is called the set of neighbors of $v_i$. The degree $d(v_i)$ of a vertex $v_i$ in $G$ is the number of edges of $G$ incident with $v_i$, i.e. , $d(v_i)=|N(v_i)|$. Let $K_n$ denote the complete graph of order $n$. Throughout this paper all graphs considered are simple and connected.

The distance between two vertices $v_i$ and $v_j$, denoted by $d(v_i,v_j)$ or simply by $d_{ij}$, is the length of a shortest path from $v_i$ to $v_j$ in $G$. The diameter of $G$, denoted by $diam(G)$, is the maximum distance between any two vertices of $G$. For any vertex $v_i\in V(G)$, the transmission of $v_i$ in $G$, denoted by $Tr_i$, is the sum of distances from $v_i$ to all other vertices of $G$, that is, $Tr_i=\sum\limits_{k=1}^n d_{ik}$. We use $Tr_{max}(G)$ to denote the maximum vertex transmission of $G$  (or simply $Tr_{max}$ if it is clear from the context). For a connected graph $G$, if $Tr_1=Tr_2=\cdots=Tr_n=r$, we say that $G$ is $r$-transmission regular; otherwise, $G$ is non-transmission regular. The sum of all distances between all unordered pairs of vertices in $G$ is called the Wiener index of graph $G$, denoted by $W(G)$, i.e., $W(G)=\frac{1}{2}\sum\limits_{i=1}^nTr_i$.

For a connected graph $G$, the distance matrix of $G$ is denoted by $D(G)=(d_{ij})_{n\times n}$. Let $Tr(G)=\textrm{diag}(Tr_1, Tr_2,\ldots, Tr_n)$ be the diagonal matrix with vertex transmissions of $G$. With the development of the matrix theory of graphs, Aouchiche and Hansen \cite{{AH}} defined the distance signless Laplacian matrix of $G$ as $D^Q(G)= Tr(G) + D(G)$. The largest distance (distance signless Laplacian) eigenvalue of $G$ is called the distance (distance signless Laplacian) spectral radius of $G$, denoted by $\mu(G)$ ($\mu_Q(G)$).

So far, the distance spectral radius and distance signless Laplacian spectral radius of a connected graph have been investigated extensively. For example, in \cite{NP}, Nath and Paul determined the unique graph with the minimum distance spectral radius among all connected bipartite graphs of order $n$ with a given matching number. In \cite{LL}, Lin and Lu determined the graphs with the minimum (maximum) distance signless Laplacian spectral radius
among all connected graphs with fixed order and clique number. In \cite{LZ}, Lin and Zhou determined the unique trees with the second and third maximum distance signless Laplacian spectral radius, respectively, and they also determined the unique connected graph with the second maximum distance signless Laplacian spectral radius. In \cite{QMZL}, Zhang et al. determined the minimum
distance spectral radius of graphs among all $n$-vertex graphs with given connectivity and
independence number, and characterized the corresponding extremal graph. For more results about distance and distance signless Laplacian spectral radius of graphs, readers may refer to \cite{AH2,AP,HR,XZ,XZL,Z} and the references therein.

In \cite{CHT}, Cui et al. proposed to study the convex linear combinations of $Tr(G)$ and $D(G)$, denoted by $D_{\alpha}(G)$ and defined as
$$D_\alpha(G)=\alpha Tr(G)+(1-\alpha)D(G),\ \ \alpha\in[0,1].$$
We call $D_{\alpha}(G)$ as the generalized distance matrix of $G$. It is easy to see $D_0(G)=D(G)$, $D_1(G)=Tr(G)$ and $2D_{\frac{1}{2}}(G)=D^Q(G)$, thus, the matrix $D_{\alpha}(G)$ extends both $D(G)$ and $D^Q(G)$. Therefore, it is very important to study the generalized distance matrix. The largest eigenvalue of $D_{\alpha}(G)$ is called the $D_{\alpha}$ spectral radius or the generalized distance spectral radius of $G$, denoted by $\mu_{\alpha}(G)$.

Recently, the generalized distance spectral radius of graphs has been studied in some papers. In \cite{CTZ}, Cui et al. characterized the unique graph with the minimum generalized distance spectral radius among the connected graphs with fixed chromatic number. In \cite{ABG}, Diaz et al. presented some extremal results on the spectral radius of generalized distance matrix that
generalize previous results on the spectral radii of the distance matrix and distance signless Laplacian matrix. In \cite{LXS}, Lin et al. gave several graph transformations on the $D_{\alpha}$ spectral radius of graphs. There are also some other interesting results on the $D_{\alpha}$ spectral radius of graphs which can be found in \cite{BGGP,ABG,LYH,PGRS}.

In the theory of graph spectra, there is a classic problem which is to establish upper or lower bounds for the related spectral radius of a given class of graphs. For a graph $G$, the adjacent spectral radius of $G$ is denoted by $\rho(G)$. It is well-known that the maximum degree $\Delta(G)$ is no less than $\rho(G)$. In addition, $\Delta(G)=\rho(G)$ if and only if $G$ is a regular graph. Thus, $\Delta(G)-\rho(G)$ has been considered as a measure of irregularity for a graph $G$ and many interesting results regarding its estimation have been obtained, see \cite{CXD,CI,LHY,S,ZHANG}.

By analogy with adjacency matrices, it is natural to consider the difference between the maximum vertex transmission and the distance spectral radius of non-transmission regular graphs. Motivated by some known estimates on $\Delta(G)-\rho(G)$ and some methods used in estimating them, in 2018, Liu et al. \cite{LSX} posed and considered the following questions: How small can $Tr_{max}(G)-\mu(G)$ and  $2Tr_{max}(G)-\mu_Q(G)$ be when $G$ is non-transmission regular? In \cite{LSX}, Liu et al. posed a conjecture about the difference between the maximum
vertex transmission and the distance spectral radius of non-transmission regular graphs. In \cite{LSH}, Liu et al. proved the conjecture proposed by Liu et al. \cite{LSX} and they gave a lower bound for the difference between the maximum vertex transmission and the distance spectral radius of non-transmission regular graphs. In \cite{LL2}, Lan and Liu gave a sharp lower bound of $2Tr_{max}-\mu_Q(G)$ among all $n$-vertex connected graphs, and characterized the extremal graphs.

In this paper we decide to study the difference between the maximum vertex transmission and the $D_\alpha$ spectral radius of connected non-transmission regular graphs, we establish a lower bound on the difference between the maximum vertex transmission and the $D_\alpha$ spectral radius of non-transmission regular graphs, and we also characterize the extremal graphs attaining the bound. Our main result is as follows.

\begin{theorem}\label{T2.6} Let $G$ be a connected non-transmission regular graph of order $n$.
	
	(1) If $n$ is odd and $\alpha\in[0,1)$, then
	
	\begin{equation}
		Tr_{max}-\mu_{\alpha}(G)\ge\frac{(1-\alpha)n+1-\sqrt{[(1-\alpha)n+1]^2-4(1-\alpha)}}{2},
	\end{equation}
	equality holds if and only if $G\cong K_{1,2,2,\cdots,2}$ (the definition see Lemma \ref{L2.4}).
	
	(2) If $n$ is even and $\alpha\in[0,1)$, then
	
	\begin{equation}
		Tr_{max}-\mu_{\alpha}(G)\ge\frac{(1-\alpha)n+2-\sqrt{[(1-\alpha)n+2]^2-8(1-\alpha)}}{2},
	\end{equation}
	equality holds if and only if $G$ is isomorphic to any $(n-4)$-$DVDR$ graph (the definition see Page 4).
	
\end{theorem}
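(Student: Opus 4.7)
The strategy is to analyze the unit Perron eigenvector of the irreducible nonnegative symmetric matrix $D_\alpha(G)$ and to combine a global Rayleigh identity for $s:=Tr_{max}-\mu_\alpha(G)$ with a parity argument on the total transmission gap. Let $x=(x_1,\ldots,x_n)^T>0$ be the unit eigenvector with $D_\alpha(G)x=\mu_\alpha(G)x$. Expanding $x^T(Tr_{max}I-D_\alpha(G))x$ using $D_\alpha(G)=\alpha Tr(G)+(1-\alpha)D(G)$ and the symmetrization $2\sum_{i\ne j}d_{ij}(x_i^2-x_ix_j)=\sum_{i\ne j}d_{ij}(x_i-x_j)^2$, the plan is to first establish the global identity
\begin{equation*}
s=\sum_{i=1}^{n}(Tr_{max}-Tr_i)\,x_i^2+\frac{1-\alpha}{2}\sum_{i\ne j}d_{ij}(x_i-x_j)^2,
\end{equation*}
which shows that both contributions to $s$ are nonnegative and that $s>0$ exactly when $G$ is non-transmission regular.

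The dichotomy between cases (1) and (2) of the theorem comes from a parity count on the total transmission gap. Since every $Tr_i$ is a positive integer and $\sum_iTr_i=2W(G)$ is even, the quantity $\Sigma:=\sum_i(Tr_{max}-Tr_i)=n\cdot Tr_{max}-2W(G)$ has the same parity as $n\cdot Tr_{max}$. When $n$ is odd, the non-transmission regularity only forces $\Sigma\geq 1$, whereas when $n$ is even the number $n\cdot Tr_{max}$ is even, forcing $\Sigma\geq 2$. This produces the constants $c=1$ and $c=2$ that distinguish the two bounds.

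To convert the identity into a quadratic inequality in $s$, I would combine the global identity with the local eigenvalue equation at a vertex $v_k$ with $Tr_k=Tr_{max}$,
\begin{equation*}
sx_k=(1-\alpha)\sum_{j\ne k}d_{kj}(x_k-x_j),
\end{equation*}
and with the analogous identity at a vertex $v_l$ with $Tr_l<Tr_{max}$,
\begin{equation*}
sx_l=(Tr_{max}-Tr_l)x_l+(1-\alpha)\sum_{j\ne l}d_{lj}(x_l-x_j).
\end{equation*}
Writing $M=\max_jx_j$ and $m=\min_jx_j$, using the two identities with the natural extremal choices for $v_k$ and $v_l$, and estimating $x_j$ from below by $m$ (resp.\ from above by $M$) in each sum, the ratio $M/m$ can be eliminated between the two resulting inequalities. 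The outcome is a quadratic inequality $s^2-[(1-\alpha)n+c]s+c(1-\alpha)\leq 0$, and the lower bound on $s$ in the theorem is precisely the smaller root of the associated quadratic; one checks directly that $s$ lies below the larger root since $s\leq Tr_{max}$.

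For the equality characterization, each inequality must be tight: the local identity at $v_k$ being tight forces the Perron vector to be two-valued with all coordinates other than $x_k$ equal; the parity bound being tight forces $\Sigma$ to equal $c$; and the elimination step being tight forces a rigid distance profile from $v_k$ and $v_l$. The two-valued Perron vector induces an equitable partition of $V(G)$ for $D_\alpha(G)$, and combining this with the minimum possible $\Sigma$ pins down $G$ as $K_{1,2,\ldots,2}$ when $n$ is odd and as an $(n-4)$-$DVDR$ graph when $n$ is even; the converse is verified by computing $\mu_\alpha$ of the extremal graphs from the $2\times 2$ or $3\times 3$ quotient matrix induced by the transmission partition. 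The main obstacle will be the elimination step that produces the precise discriminant $[(1-\alpha)n+c]^2-4c(1-\alpha)$: coordinating the two local identities so that the cross terms cancel and the coefficient of $s$ in the resulting quadratic is exactly $(1-\alpha)n+c$ requires carefully choosing the weighting between the two equations, and accounts for most of the estimation work in the proof.
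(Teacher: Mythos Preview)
Your overall strategy---parity bound $\Sigma:=nTr_{max}-2W\ge c$ (with $c=1$ for $n$ odd, $c=2$ for $n$ even), combined with eigenvector estimates to produce the quadratic $s^{2}-[(1-\alpha)n+c]s+c(1-\alpha)\le 0$---is sound and is in fact more direct than the paper's argument. The paper does \emph{not} argue this way: it first computes $\mu_\alpha$ for the candidate extremal graphs (Lemmas~2.4--2.5), then works with a minimizer $G_0$ so that the a~priori bound $\tau(G_0)\le\tau_n$ is available from the outset; this upper bound is used to cap $x_{max}/x_{min}\le 1/(1-\tau_n)$, which together with the summed identity $s\sum_w x_w=\sum_w(Tr_{max}-Tr_w)x_w$ forces $\Sigma\le\rho_n$, hence $\Sigma=\rho_n$ by parity, and the equality analysis is then read off from the resulting chain of tight inequalities. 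Your route avoids the minimizer device and the preliminary computation of the extremal graphs, at the cost of having to check separately that $s<1-\alpha$ and $s<c$ (or dispose of those cases).

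There is, however, a concrete gap in your sketch. The two local equations you write down are taken at a vertex $v_k$ with $Tr_k=Tr_{max}$ and a vertex $v_l$ with $Tr_l<Tr_{max}$, but the elimination step that actually yields the quadratic needs the vertex with \emph{largest eigenvector entry}, not largest transmission: from the $u$-th equation with $x_u=M$ one gets
\[
sM\ \ge\ (1-\alpha)\,d(u,v)(M-m)\ \ge\ (1-\alpha)(M-m),
\]
which bounds $M/m\le (1-\alpha)/((1-\alpha)-s)$; if instead $v_k$ merely has $Tr_k=Tr_{max}$ there is no control on $x_k$, and replacing $x_j$ by $m$ or $M$ in $\sum_j d_{kj}(x_k-x_j)$ gives either a trivial sign or an inequality in the wrong direction. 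Likewise, the global Rayleigh identity you derive is correct but not the tool that couples with the parity bound: what is needed is the \emph{linear} summed relation $s\sum_i x_i=\sum_i(Tr_{max}-Tr_i)x_i\ge c\,m$ together with $\sum_i x_i\le m+(n-1)M$. Combining these two with the $M/m$ bound above and clearing denominators gives precisely $s^{2}-[(1-\alpha)n+c]s+c(1-\alpha)\le 0$. So your framework works once you switch the local equation to the vertex with $x_u=x_{max}$ and replace the Rayleigh quadratic form by the summed linear identity; without that switch the elimination you describe will not close.
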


\section{Preliminaries}

In this section, we will introduce some important concepts and useful lemmas which will be used in the following.

\noindent\begin{lemma} (\cite{HJ}) Let $M$ be an irreducible and nonnegative square matrix of order $n$. Let $\rho(M)$ be the spectral radius of $M$. Then the following results hold.\\
		(a) $\rho(M)>0$;\\
		(b) $\rho(M)$ is an algebraically and geometrically simple eigenvalue of $M$;\\
		(c) There is a positive eigenvector corresponding to eigenvalue $\rho(M)$.
\end{lemma}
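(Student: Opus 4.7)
The plan is to prove this as the classical Perron--Frobenius theorem for irreducible nonnegative matrices, built on two pillars: a fixed-point argument to produce a nonnegative eigenvector realizing the spectral radius, and the entrywise positivity of $(I+M)^{n-1}$ under irreducibility to upgrade the eigenvector to be strictly positive.

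For parts (a) and (c), I would consider the continuous self-map $F(x) = Mx/\|Mx\|_1$ on the standard simplex $\Delta_n = \{x \geq 0 : \sum_i x_i = 1\}$. Irreducibility forces every row of $M$ to be nonzero, so $Mx \neq 0$ on $\Delta_n$ and $F$ is well-defined. Brouwer's fixed point theorem then yields $v \in \Delta_n$ with $Mv = \lambda v$ for some $\lambda \geq 0$. To show $v > 0$, I would invoke the standard equivalence between irreducibility of $M$ and strong connectedness of its associated directed graph, which implies $(I+M)^{n-1}$ has all strictly positive entries; applied to $v \geq 0$, $v \neq 0$, this gives $(1+\lambda)^{n-1} v > 0$, hence $v > 0$, establishing (c). Since every row of $M$ contains at least one positive entry, $(Mv)_i > 0$ for each $i$, so $\lambda > 0$, establishing (a). To identify $\lambda$ with $\rho(M)$, I would apply a Collatz--Wielandt-type estimate: for any eigenvalue $\mu$ with eigenvector $y$, the entrywise inequality $M|y| \geq |\mu|\,|y|$ paired on the left with a strictly positive left Perron eigenvector $u^T > 0$ (obtained by running the same construction on $M^T$) forces $|\mu| \leq \lambda$.

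For part (b), geometric simplicity follows from a scaling argument: if $w$ is any real $\lambda$-eigenvector, choose the smallest $c \in \mathbb{R}$ such that $cv - w \geq 0$ has some zero coordinate; this nonnegative eigenvector must then vanish by the strict-positivity argument above, forcing $w = cv$. For algebraic simplicity, I would suppose toward contradiction that there is a generalized eigenvector $z$ with $(M - \lambda I)z = v$, and pair on the left with $u^T$: we obtain $0 = u^T(M - \lambda I)z = u^T v$, which is impossible since $u > 0$ and $v > 0$.

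The main obstacle, as in any complete proof of Perron--Frobenius, is the leap from geometric to algebraic simplicity; the left-eigenvector pairing is the decisive trick, and it requires separately running the full Perron construction on $M^T$ to guarantee a strictly positive left eigenvector. The remaining ingredients are comparatively routine, resting on Brouwer's theorem and the graph-theoretic interpretation of irreducibility that yields the entrywise positivity of $(I+M)^{n-1}$.
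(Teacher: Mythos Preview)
The paper does not supply a proof of this lemma; it is simply quoted as the Perron--Frobenius theorem from Horn and Johnson. Your outline is the standard modern proof and is essentially correct: Brouwer gives a nonnegative eigenvector, the entrywise positivity of $(I+M)^{n-1}$ upgrades it to a strictly positive one, and the pairing with a strictly positive left eigenvector both identifies the eigenvalue with $\rho(M)$ and kills any putative Jordan block.

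One small slip worth fixing: to guarantee that $F(x)=Mx/\|Mx\|_1$ is well-defined on $\Delta_n$ you need every \emph{column} of $M$ to be nonzero, not every row. (If $x=e_j$ and column $j$ vanishes then $Mx=0$; the row condition by itself does not prevent this.) Irreducibility does force all columns to be nonzero, so the conclusion stands, but the stated reason is wrong. A cleaner alternative is to run Brouwer on $x\mapsto (I+M)x/\|(I+M)x\|_1$, which is automatically well-defined and meshes directly with your use of $(I+M)^{n-1}>0$; the resulting eigenpair $(1+\lambda,v)$ for $I+M$ gives $(\lambda,v)$ for $M$. Your later use of ``every row nonzero'' to deduce $\lambda>0$ once $v>0$ is, by contrast, the correct direction.
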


Equitable quotient matrix has significant applications in calculating the eigenvalues of matrices associated with graphs. Next, we will introduce the definition of the equitable quotient matrix and its some properties.

\noindent\begin{lemma}(\cite{YYSX})\label{D1} Let $M$ be a complex matrix of order $n$ described in the following block form
\begin{equation*}
	M=\begin{bmatrix}
		M_{11} & \cdots & M_{1t} \\
		\vdots& \ddots & \vdots \\
		M_{t1} &\cdots & M_{tt}
	\end{bmatrix},
\end{equation*}
where the blocks $M_{ij}$ are the $n_i\times n_j$ matrices for any $1\le i,j\le t$ and $n=n_1+n_2+\cdots+n_t.$ For $1\le i,j\le t$, let $b_{ij}$ denote the average row sum of $M_{ij}$, i.e. $b_{ij}$ is the sum of all entries in $M_{ij}$ divided by the number of rows. Then $B(M)=(b_{ij})$(or simply $B$) is called the quotient matrix of $M$. If, in addition, for each pair $i$, $j$, $M_{ij}$ has a constant row sum, then $B$ is called the equitable quotient matrix of $M$.
\end{lemma}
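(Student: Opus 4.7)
The statement labelled Lemma~\ref{D1} is purely definitional in character: it fixes the term \emph{quotient matrix} for the $t\times t$ array $B(M)=(b_{ij})$ whose entries are the average row sums of the blocks $M_{ij}$, and the refined term \emph{equitable quotient matrix} for the special case in which every block $M_{ij}$ has constant row sum. As worded, no mathematical assertion is being made beyond introducing notation, so there is strictly nothing to prove. The only thing to check is well-definedness, and that is immediate: under the block partition $n=n_1+\cdots+n_t$, the quantity $b_{ij}=\mathbf{1}^{\top}M_{ij}\mathbf{1}/n_i$ is a single complex number for each ordered pair $(i,j)$, so $B(M)$ is unambiguously a $t\times t$ matrix; moreover, when every $M_{ij}$ has constant row sum, that constant must equal $b_{ij}$, so the two pieces of terminology are consistent.

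The only nontrivial content that might be attached to such a lemma (and which is presumably the reason the authors cite it) is the standard spectral consequence that, when $B$ is the \emph{equitable} quotient matrix of $M$, every eigenvalue of $B$ is also an eigenvalue of $M$. If that is the intended use in later arguments, the plan for the proof would be the textbook one: given $B x=\lambda x$ with $x\in\mathbb{C}^t$, lift $x$ to $\tilde x\in\mathbb{C}^n$ by setting $\tilde x_k=x_i$ for every index $k$ belonging to the $i$-th block, and then verify $M\tilde x=\lambda\tilde x$ by a direct block computation. The key step uses the constant-row-sum hypothesis: for every row $k$ in block $i$, one has $\sum_{\ell\in\text{block }j}(M_{ij})_{k\ell}=b_{ij}$ independently of~$k$, so
\begin{equation*}
(M\tilde x)_k=\sum_{j=1}^{t}\sum_{\ell\in\text{block }j}(M_{ij})_{k\ell}\,x_j=\sum_{j=1}^{t}b_{ij}x_j=(Bx)_i=\lambda x_i=\lambda\tilde x_k.
\end{equation*}
No obstacle arises: the constant-row-sum condition is exactly what is needed to make the inner sum independent of the representative row~$k$.

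In summary, for the statement as literally written there is no proof to sketch, since it is a definition; any proof proposal would instead pertain to the accompanying spectral-lifting property above, whose one obstacle, the collapse of the inner block sum, is resolved directly by the equitability hypothesis.
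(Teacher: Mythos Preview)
Your assessment is correct and matches the paper exactly: the statement is a definition (mislabelled as a lemma), the paper offers no proof for it, and the spectral consequence you anticipate is precisely the content of the paper's next lemma (Lemma~\ref{L2.3}), which is likewise quoted from \cite{YYSX} without proof. There is nothing to add.
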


\noindent\begin{lemma}\label{L2.3}(\cite{YYSX}) Let $B$ be the equitable quotient matrix of $M$, where $M$ is defined as in Definition \ref{D1}. In addition, let $M$ be a nonnegative matrix, and $\rho(B)$ and $\rho(M)$ denote the spectral radii of $B$ and $M$, respectively. Then $\rho(B)=\rho(M)$.
\end{lemma}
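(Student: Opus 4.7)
My plan is to exploit the matrix identity $MP=PB$ induced by the equitable partition and, via a small perturbation, reduce the problem to the irreducible Perron--Frobenius theorem stated at the start of this section.

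First I would introduce the $n\times t$ characteristic matrix $P$ of the partition, with $P_{ki}=1$ if index $k$ belongs to block $i$ and $P_{ki}=0$ otherwise. The hypothesis that each block $M_{ij}$ has constant row sum $b_{ij}$ is exactly the identity $MP=PB$, since $(MP)_{kj}=\sum_{\ell \text{ in block } j} M_{k\ell}=b_{[k]\,j}=(PB)_{kj}$, where $[k]$ is the block containing $k$. This identity lifts eigenvectors: if $Bv=\lambda v$ with $v\ne 0$, then $M(Pv)=\lambda(Pv)$ and $Pv\ne 0$, since the columns of $P$ (indicator vectors of disjoint blocks) are linearly independent. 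In particular every eigenvalue of $B$ is an eigenvalue of $M$.

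For the equality of the spectral radii I would use the perturbation $M_\varepsilon=M+\varepsilon J_n$ for $\varepsilon>0$, where $J_n$ is the all-ones matrix. Each block of $J_n$ has constant row sum $n_j$, so the same partition remains equitable for $M_\varepsilon$, with quotient matrix $B_\varepsilon$ whose entries are $b_{ij}+\varepsilon n_j$; in particular $B_\varepsilon\to B$ as $\varepsilon\to 0^+$. Both $M_\varepsilon$ and $B_\varepsilon$ are entrywise positive, hence irreducible. Applying the stated Perron--Frobenius lemma to $B_\varepsilon$ gives a strictly positive Perron eigenvector $v_\varepsilon$; its lift $Pv_\varepsilon$ is a strictly positive eigenvector of $M_\varepsilon$ with eigenvalue $\rho(B_\varepsilon)$. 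Because an irreducible nonnegative matrix admits a strictly positive eigenvector only for its Perron eigenvalue (a standard consequence of the lemma applied to $M_\varepsilon^{T}$ to obtain a positive left Perron eigenvector and then testing $y^{T}M_\varepsilon x$ in two ways), we conclude $\rho(M_\varepsilon)=\rho(B_\varepsilon)$. Letting $\varepsilon\to 0^+$ and using continuity of the spectral radius in the matrix entries yields $\rho(M)=\rho(B)$.

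The main obstacle is precisely the possibility that $B$ itself is reducible: then the Perron eigenvector of $B$ need not be strictly positive, so lifting it to $M$ does not automatically produce a positive eigenvector of $M$, and one cannot directly invoke the uniqueness clause of Perron--Frobenius. The $\varepsilon J_n$ perturbation sidesteps this by making every block strictly positive so that the irreducible form of Perron--Frobenius applies cleanly, after which the limit $\varepsilon\to 0^+$ recovers the original assertion.
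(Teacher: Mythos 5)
Your proof is correct; note, though, that the paper does not actually prove this lemma --- it is stated with a citation to \cite{YYSX} and used as a black box --- so the comparison here is between your argument and the literature rather than an internal proof. Every step of your argument checks out: equitability of the partition is exactly the intertwining relation $MP=PB$ for the characteristic matrix $P$; lifting eigenvectors through $P$ (which is injective, since its columns are indicators of disjoint blocks) gives $\sigma(B)\subseteq\sigma(M)$; the perturbation $M_\varepsilon=M+\varepsilon J_n$ preserves equitability of the same partition with quotient entries $b_{ij}+\varepsilon n_j$; since each row of $P$ contains exactly one entry equal to $1$, the lift $Pv_\varepsilon$ of the Perron vector of $B_\varepsilon$ is strictly positive; and your pairing argument, evaluating $y^{T}M_\varepsilon x$ two ways with a positive left Perron vector $y$ of $M_\varepsilon$, correctly shows that an irreducible nonnegative matrix admits a strictly positive eigenvector only for its Perron root, whence $\rho(M_\varepsilon)=\rho(B_\varepsilon)$; continuity of the spectral radius in the matrix entries then finishes. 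You also diagnosed the genuine crux accurately: when $M$ or $B$ is reducible, the naive pairing of a nonnegative left Perron vector $y$ of $M$ with a nonnegative Perron vector $v$ of $B$ can degenerate because $y^{T}Pv=0$ is possible, and the $\varepsilon J_n$ regularization repairs this while staying entirely within the irreducible Perron--Frobenius statement the paper records as Lemma 2.1.

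As a remark, there is an even shorter route that avoids both irreducibility and the limiting argument: iterating $MP=PB$ gives $M^{k}P=PB^{k}$ for all $k\ge 1$, and applying this to the all-ones vectors ($Pe_t=e_n$, where $e_t\in\mathbb{R}^{t}$ and $e_n\in\mathbb{R}^{n}$) shows that the row sums of $M^{k}$ are obtained by lifting the row sums of $B^{k}$; since every block is nonempty and both $M^{k}$ and $B^{k}$ are nonnegative, this yields $\|M^{k}\|_\infty=\|B^{k}\|_\infty$ for all $k$, and Gelfand's formula $\rho(\cdot)=\lim_{k\to\infty}\|(\cdot)^{k}\|_\infty^{1/k}$ gives $\rho(M)=\rho(B)$ in one stroke. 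Your proof buys a concrete picture of the Perron eigenvectors (useful elsewhere in this paper), while the norm argument buys brevity and complete indifference to reducibility.
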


For a connected graph $G$ with order $n$, take $\tau(G)=\frac{Tr_{max}(G)-\mu_{\alpha}(G)}{1-\alpha}$, where $Tr_{max}(G)$ is the maximum vertex transmission of $G$. Then

$$(1-\alpha)\tau(G)=Tr_{max}(G)-\mu_{\alpha}(G).$$

In \cite{AP}, Atik and Panigrahi introduced a class of graphs called $DVDR$ graph. Let $G$ be a connected graph with order $n$. If there exist a vertex $v$ in $G$ such that the degree of $v$ is $  n-1$ and $G-v$ is an regular graph, then we call $G$ to be a distinguished vertex deleted regular graph ($DVDR$, see Figure 1). The vertex $v$ with $d(v)=n-1$ is said to be a distinguished vertex of a graph $G$. In addition, we say $G$ is an $r$-$DVDR$ graph if $G-v$ is $r$-regular.

\begin{figure}[H]
	\begin{centering}
		\includegraphics[scale=0.8]{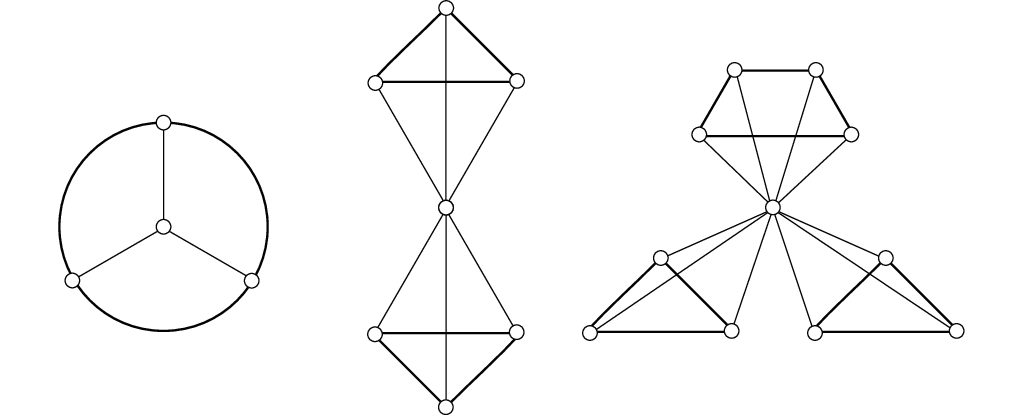}
		\caption{Some examples of $DVDR$ graphs}\label{Fig.1.}
	\end{centering}
\end{figure}

In the following, we will calculate the $(1-\alpha)\tau(G)$ of two special graphs.

\noindent\begin{lemma}\label{L2.4} Let $n=2r-1$ and $K=K_{1,\underbrace{\scriptstyle 2,2,\cdots,2}_{r-1}}$ be the complete $r$-partite graph on $n$ vertices which all subsets contain two vertices except one subset contains only one vertex $v$ with $d(v)=n-1$. Then
	\begin{equation}
		(1-\alpha)\tau(K)=Tr_{max}(K)-\mu_{\alpha}(K)=\frac{(1-\alpha)n+1-\sqrt{[(1-\alpha)n+1]^2-4(1-\alpha)}}{2}.
	\end{equation}
\end{lemma}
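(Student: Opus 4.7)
The plan is to exploit the symmetry of $K=K_{1,2,2,\ldots,2}$ to reduce the eigenvalue problem to a $2\times 2$ matrix via an equitable partition (as permitted by Lemma~\ref{L2.3}), and then just match constants in the resulting closed-form expression.

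First I would record the distance data for $K$. Since $K$ is a complete multipartite graph, two vertices in different parts are at distance $1$ and two vertices in the same part are at distance $2$. The distinguished vertex $v$ is the only vertex in its part, so $d(v,u)=1$ for every $u\ne v$, giving $Tr(v)=n-1$. Every other vertex $u$ has exactly one ``partner'' in its part at distance $2$, with the remaining $n-2$ vertices at distance $1$, so $Tr(u)=2+(n-2)=n$. In particular, $Tr_{max}(K)=n$.

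Next I would use the partition $V(K)=V_0\cup V_1$ with $V_0=\{v\}$ and $V_1=V(K)\setminus\{v\}$. Checking the row sums of each block of $D_\alpha(K)$ shows this partition is equitable: from $v$, every entry into $V_1$ is $(1-\alpha)\cdot 1$; from any paired vertex $u$, the entry into $V_0$ is $(1-\alpha)$, and within $V_1$ each row contains the diagonal $\alpha n$, one entry of value $2(1-\alpha)$ (the partner), and $n-3$ entries of value $(1-\alpha)$. Lemma~\ref{L2.3} gives $\mu_\alpha(K)=\rho(B)$, where
\begin{equation*}
B=\begin{pmatrix} \alpha(n-1) & (1-\alpha)(n-1) \\ 1-\alpha & \alpha n+(1-\alpha)(n-1)\end{pmatrix}.
\end{equation*}

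Finally I would compute $Tr_{max}(K)-\mu_\alpha(K)=n-\rho(B)$ using the quadratic formula for the larger root of $\det(\lambda I-B)$. The trace of $B$ equals $(n-1)+\alpha n$, so
\begin{equation*}
2n-\mathrm{tr}(B)=2n-(n-1)-\alpha n=(1-\alpha)n+1,
\end{equation*}
which is exactly the linear part of the claimed formula. The only substantive task is to verify that the discriminant $(b_{11}-b_{22})^2+4b_{12}b_{21}$ equals $[(1-\alpha)n+1]^2-4(1-\alpha)$. This is the main obstacle, though it is purely algebraic: after expanding $b_{11}-b_{22}=-(1-\alpha)n+(1-2\alpha)$ and adding $4(1-\alpha)^2(n-1)$, the $n^2$-coefficients match, the $n$-coefficients collapse via $(2\alpha-1)+2(1-\alpha)=1$ to give $2(1-\alpha)n$, and the constants reduce to $(2\alpha-1)^2-4(1-\alpha)^2=4\alpha-3=1-4(1-\alpha)$. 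Substituting into $n-\rho(B)$ yields precisely the identity stated in the lemma.
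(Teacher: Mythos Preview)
Your proof is correct and follows essentially the same route as the paper: both use the equitable partition $\{v\}\cup(V(K)\setminus\{v\})$, obtain the identical $2\times 2$ quotient matrix, invoke Lemma~\ref{L2.3}, and then finish by elementary algebra. You are simply more explicit in verifying that the partition is equitable and in checking the discriminant term by term, which the paper omits.
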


\begin{proof}
Let $v$ be the distinguished vertex of $V(K)$. Then the equitable quotient matrix of $D_\alpha(K)$ with respect to the partition ${\{v\}}\cup(V(K)\setminus \{v\})$ is
	\[M=
	\begin{bmatrix}
		\alpha(n-1) & (1-\alpha)(n-1)\\
		1-\alpha & \alpha n+(1-\alpha)(n-1)
	\end{bmatrix}.
	\]
By Lemma \ref{L2.3}, we know that $\mu_{\alpha}(K)$ is equal to the largest eigenvalue of $M$. Hence
	$$\mu_{\alpha}(K)=\frac{(1+\alpha)n-1+\sqrt{[(1-\alpha)n+1]^2-4(1-\alpha)}}{2}.$$
Since $Tr_{max}(K)=n$,
	$$	(1-\alpha)\tau(K)=Tr_{max}(K)-\mu_{\alpha}(K)=\frac{(1-\alpha)n+1-\sqrt{[(1-\alpha)n+1]^2-4(1-\alpha)}}{2},$$
which completes the proof.
\end{proof}

\noindent\begin{lemma} Let $G$ be an $(n-4)$-$DVDR$ graph of order $n$. Let $v$ be the distinguished vertex with $d(v)=n-1$. Then
	\begin{equation}
		(1-\alpha)\tau(G)=Tr_{max}(G)-\mu_{\alpha}(G)=\frac{(1-\alpha)n+2-\sqrt{[(1-\alpha)n+2]^2-8(1-\alpha)}}{2}.
	\end{equation}
\end{lemma}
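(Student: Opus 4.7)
The plan is to mirror the strategy of Lemma \ref{L2.4}: identify the distance structure of $G$, verify that the partition $\{v\}\cup(V(G)\setminus\{v\})$ is equitable for $D_\alpha(G)$, form the $2\times 2$ equitable quotient matrix, and read off $\mu_\alpha(G)$ from its largest eigenvalue via Lemma \ref{L2.3}.

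First I would work out the distance structure. Since $d(v)=n-1$, $v$ is adjacent to every other vertex, so $d(v,w)=1$ for all $w\neq v$ and hence $Tr(v)=n-1$. For any $u\in V(G)\setminus\{v\}$, since $G-v$ is $(n-4)$-regular, $u$ has exactly $n-4$ neighbors inside $V(G)\setminus\{v\}$ and exactly $(n-1)-1-(n-4)=2$ non-neighbors inside $V(G)\setminus\{v\}$; each such non-neighbor is at distance $2$ from $u$ via $v$. Together with the contribution $d(u,v)=1$, this gives $Tr(u)=1+(n-4)+2\cdot 2=n+1$ for every $u\neq v$. In particular $Tr_{\max}(G)=n+1$ and the transmissions take only two values, which is exactly what allows the two-part partition to be equitable.

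Next I would check equitability explicitly: the row of $v$ in the $\{v\}$-block contributes $\alpha Tr(v)=\alpha(n-1)$, its row in the other block contributes $(1-\alpha)(n-1)$; for any $u\neq v$, the row of $u$ in the $\{v\}$-block contributes $(1-\alpha)d(u,v)=1-\alpha$, and in the other block it contributes $\alpha Tr(u)+(1-\alpha)(Tr(u)-d(u,v))=\alpha(n+1)+(1-\alpha)n=n+\alpha$, which is independent of $u$. Hence the equitable quotient matrix is
\begin{equation*}
M=\begin{bmatrix} \alpha(n-1) & (1-\alpha)(n-1)\\ 1-\alpha & n+\alpha \end{bmatrix}.
\end{equation*}
By Lemma \ref{L2.3}, $\mu_\alpha(G)=\rho(M)$.

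Finally I would compute the largest root of $\det(\lambda I-M)=0$. The trace is $n(1+\alpha)$, and a short expansion shows the determinant equals $(n-1)[\alpha(n+2)-1]$, so
\begin{equation*}
\mu_\alpha(G)=\frac{n(1+\alpha)+\sqrt{n^2(1+\alpha)^2-4(n-1)[\alpha(n+2)-1]}}{2}.
\end{equation*}
The only real work is to rewrite the discriminant as $[(1-\alpha)n+2]^2-8(1-\alpha)$; grouping the expansion by powers of $(1-\alpha)$ yields $(1-\alpha)^2 n^2+4(1-\alpha)n-4+8\alpha$, which indeed equals $[(1-\alpha)n+2]^2-8(1-\alpha)$. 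Subtracting from $Tr_{\max}(G)=n+1$ and multiplying numerator/denominator appropriately then gives the desired expression for $(1-\alpha)\tau(G)=Tr_{\max}(G)-\mu_\alpha(G)$. There is no real obstacle here beyond the algebraic simplification of the discriminant; the main conceptual point is noticing that the $(n-4)$-regularity of $G-v$ is precisely what forces $Tr(u)=n+1$ uniformly on $V(G)\setminus\{v\}$ and thereby makes the two-part partition equitable.
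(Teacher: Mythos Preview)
Your proposal is correct and follows essentially the same route as the paper: form the equitable quotient matrix of $D_\alpha(G)$ for the partition $\{v\}\cup(V(G)\setminus\{v\})$, invoke Lemma~\ref{L2.3} to identify $\mu_\alpha(G)$ with its largest eigenvalue, and subtract from $Tr_{\max}(G)=n+1$. Your write-up is in fact more complete than the paper's, which simply records the quotient matrix $N$ without spelling out the distance structure or the verification of equitability; your observation that the $(n-4)$-regularity of $G-v$ forces $Tr(u)=n+1$ uniformly is exactly the point the paper leaves implicit.
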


\begin{proof}
	Similar to the proof of Lemma \ref{L2.4}, the equitable quotient matrix of $D_\alpha(G)$ with respect to the partition ${\{v\}}\cup(V(G)\setminus \{v\})$ is
	\[N=
	\begin{bmatrix}
		\alpha(n-1) & (1-\alpha)(n-1)\\
		1-\alpha & \alpha (n+1)+(1-\alpha)n
	\end{bmatrix}.
	\]
	By Lemma \ref{L2.3}, we know that $\mu_{\alpha}(G)$ is equal to the largest eigenvalue of $N$. Hence
	$$\mu_{\alpha}(G)=\frac{(1+\alpha)n+\sqrt{[(1-\alpha)n+2]^2-8(1-\alpha)}}{2}.$$
	Since $Tr_{max}(G)=n+1$, by a simple calculation , we get
	$$	(1-\alpha)\tau(G)=Tr_{max}(G)-\mu_{\alpha}(G)=\frac{(1-\alpha)n+2-\sqrt{[(1-\alpha)n+2]^2-8(1-\alpha)}}{2},$$
	which completes the proof.
\end{proof}
Based on the above results, we define:
\begin{equation}\label{eq:4}
	(1-\alpha)\tau_n=\frac{(1-\alpha)n+\rho_n-\sqrt{[(1-\alpha)n+\rho_n]^2-4\rho_n(1-\alpha)}}{2},
\end{equation}
where $\rho_n=1$ if $n$ is odd; $\rho_n=2$ if $n$ is even. Clearly, $\tau_n$ satisfies the following equation:
\begin{equation}\label{eq:5}
	(1-\alpha)\tau_n^2-[(1-\alpha)n+\rho_n]\tau_n+\rho_n=0
\end{equation}
It is obvious that if $n$ is odd, then $(1-\alpha)\tau_n=(1-\alpha)\tau(K_{1,2,2,\cdots,2})$; if $n$ is even, then $(1-\alpha)\tau_n=(1-\alpha)\tau(G)$ for any $(n-4)$-$DVDR$ graph $G$.

Let $G_0$ be a graph which attains the minimum of $Tr_{max}(G)-\mu_{\alpha}(G)$ among all connected non-transmission regular graph $G$ of order $n$. Noting that the graphs in Lemma 2.4 and Lemma 2.5 are both connected non-transmission regular graphs and $G_0$ be a graph such that attains the minimum of $(1-\alpha)\tau(G)$ among all connected non-transmission regular graph $G$ of order $n$. Hence $\tau(G_0)\le \tau(G)$. Specifically, $\tau(G_0)\le \tau_n$.

To simplify the proof, we first provide some notations that will be used in the proof. Since $G_0$ is a connected graph, then it follows from the Perron Frobenius Theorem \cite{HJ} that $\mu_{\alpha}(G_0)$ is an eigenvalue of $D_{\alpha}(G_0)$, and there is a unique positive unit eigenvector corresponding to $\mu_{\alpha}(G_0)$.
We use $X=(x_1, x_2, \cdots, x_n)^{T}$ to denote the positive unit eigenvector corresponding to $\mu_{\alpha}(G_0)$. To simplify the writing process, let $x_{max}=\max\{x_i|1\le i\le n\}$ and $x_{min}=\min\{{x_i|1\le i\le n\}}$. Let $Tr_{max}=\max\{Tr(v)|v\in V(G_0)\}$ and $W=W(G_0)=\frac{1}{2}\sum\limits_{w\in V(G_0)}Tr_w$.

In the following sections, we will provide the proof of Theorem 1.1.

\section{Proof of Theorem \ref{T2.6}}

\begin{proof} According to the previous note on $G_0$, let $u$ and $v$ be two vertices of $G_0$ that satisfy $x_u=x_{max}$ and $x_v=x_{min}$, respectively. For convenience, let $\mu_{\alpha}=\mu_{\alpha}(G_0)$. From the $u$-th equation of $D_{\alpha}(G_0)X=\mu_{\alpha}X$, we have
	\begin{align*}
		\mu_{\alpha} x_u&=\alpha Tr_u x_u+(1-\alpha)\sum_{w\in V(G_0)} d(u,w)x_w \\
		&=\alpha Tr_u x_u+(1-\alpha)[\sum_{w\in V(G_0)-\{v\}} d(u,w)x_w+d(u,v)x_v]\\
		&\leq \alpha Tr_{max}x_u+(1-\alpha)[(Tr_{max}-d(u,v))x_u+d(u,v)x_v]\\
		&=Tr_{max}x_u+(1-\alpha)[-d(u,v)x_u+d(u,v)x_v].
	\end{align*}
Based on the above result, then
$$Tr_{max}-\mu_{\alpha}\ge (1-\alpha)d(u,v)(1-\frac{x_v}{x_u})\ge (1-\alpha)(1-\frac{x_v}{x_u}).$$
In addition, according to the properties of $G_0$, it is shown that
\begin{equation}\label{eq:d}
	\tau(G_0)=\frac{Tr_{\max}-\mu_{\alpha}}{1-\alpha}\leq \tau_n.
\end{equation}
Hence, we obtain
	\begin{equation*}\label{eq:b}
		(1-\tau_n)x_u\leq x_v.
	\end{equation*}

{\bf Claim 1.} $\tau_n<1$.

  {\bf Proof.} From (\ref{eq:4}) and $\alpha\in[0,1)$, we consider two cases as follow:\\
Case 1. If $n$ is odd, then $\rho_n$=1. Thus, $\tau_n=\frac{(1-\alpha)n+1-\sqrt{[(1-\alpha)n+1]^2-4(1-\alpha)}}{2(1-\alpha)}$. We take $\tau_n(1)=\tau_n$ under this condition.
If $\tau_n(1)\ge 1$, by a simple calculation, we get $(1-\alpha)^2(n-2)^2\ge (1-\alpha)^2n^2$, this is a contradiction.\\
Case 2. If $n$ is even, then $\rho_n$=2. Thus, $\tau_n=\frac{(1-\alpha)n+2-\sqrt{[(1-\alpha)n+2]^2-8(1-\alpha)}}{2(1-\alpha)}$. We take $\tau_n(2)=\tau_n$ under this condition.
Using the same method as Case 1, we have $\tau_n(2)<1$.

Hence, $\tau_n<1$ and
\begin{equation}\label{eq:8}
\frac{x_{max}}{x_{min}}=\frac{x_u}{x_v}\leq\frac{1}{1-\tau_n}.
\end{equation}

{\bf Claim 2.} $nTr_{max}-2W=\rho_n$ and $\frac{x_{max}}{x_{min}}=\frac{1}{1-\tau_n}$.

{\bf Proof.}
Note that $D_{\alpha}(G_0)X=\mu_{\alpha}X$, we have $\mu_{\alpha}(1,1,\cdots,1)X=(1,1,\cdots,1)D_{\alpha}(G_0)X$. Furthermore,

\begin{equation*}\label{eq:37}
	\mu_{\alpha}\sum\limits_{w\in V(G_0)}x_w=\alpha\sum\limits_{w\in V(G_0)}Tr_wx_w+(1-\alpha)\sum\limits_{w\in V(G_0)}Tr_wx_w=\sum\limits_{w\in V(G_0)}Tr_wx_w.
\end{equation*}
Thus,
\begin{equation}\label{eq:33}
	(Tr_{max}-\mu_{\alpha})\sum_{w\in V(G_0)}x_w=\sum_{w\in V(G_0)}(Tr_{max}-Tr_w)x_w\\
	\ge (nTr_{max}-2W)x_{min},
\end{equation}
and
\begin{equation}
	nTr_{max}-2W\le \frac{(Tr_{max}-\mu_{\alpha})\sum\limits_{w\in V(G_0)}x_w}{x_{min}}.\\
\end{equation}
Observe that $Tr_{max}-\mu_{\alpha}=(1-\alpha)\tau (G_0)\le(1-\alpha)\tau_n$ and $\sum\limits_{w\in V(G_0)}x_w\le x_{min}+(n-1)x_{max}$, combining with (\ref{eq:8}), we obtain
\begin{equation}\label{eq:c}
\begin{aligned}
			nTr_{max}-2W
			&\le \frac{(Tr_{max}-\mu_{\alpha})\sum_{w\in V(G_0)}x_w}{x_{min}} \\
			&\le \frac{(1-\alpha)\tau_n[x_{min}+(n-1)x_{max}]}{x_{min}}\\
			&= (1-\alpha)\tau_n[1+(n-1)\frac{x_{max}}{x_{min}}]\\
			&\le  (1-\alpha)\tau_n[1+(n-1)\frac{1}{1-\tau_n}]\\
			&=(1-\alpha)\tau_n\frac{n-\tau_n}{1-\tau_n}.
\end{aligned}
\end{equation}
From (\ref{eq:5}), we have
\begin{equation}
	\rho_n=\frac{(1-\alpha)(n\tau_n-\tau^2_n)}{1-\tau_n}.
\end{equation}
Therefore $$nTr_{max}-2W\le \rho_n.$$

In what follows, we will check that $nTr_{max}-2W\ge \rho_n$.
Note that $nTr_{max}-2W$ is a positive integer. If $n$ is odd, we have $\rho_n=1$, it is obvious that $nTr_{max}-2W\ge1= \rho_n$. If $n$ is even, we have $\rho_n=2$ and $nTr_{max}-2W$ is an even positive integer.
Thus $nTr_{max}-2W\ge2= \rho_n$.
	
Hence, it can be concluded that $nTr_{max}-2W=\rho_n$. Moreover, all inequalities in (\ref{eq:c}) must be equalities. Thus
$$\frac{x_{max}}{x_{min}}=\frac{1}{1-\tau_n}.$$
	
	{\bf Claim 3.} $Tr_v<Tr_{max}$ and $Tr_u>Tr_{min}$.

	{\bf Proof.} Since $D_{\alpha}(G_0)X=\mu_{\alpha}X$, from the $v$-th equation, we get
	$$\mu_{\alpha}x_v=\alpha Tr_vx_v+(1-\alpha)\sum\limits_{w\in V(G_0)}d(v,w)x_w\ge Tr_vx_v,$$
i.e., $$Tr_v\leq\mu_{\alpha}.$$
Combining with $\mu_{\alpha}<Tr_{max}$, then $Tr_v<Tr_{max}$.

Similarly, we obtain $Tr_u>Tr_{min}$.
	
	{\bf Claim 4.} $Tr_{max}-\mu_{\alpha}=(1-\alpha)\tau_n.$
	
	{\bf Proof.} It follows from (\ref{eq:33}) and Claim 2, we have
\begin{equation}\label{eq:11}
		(Tr_{max}-\mu_{\alpha})\sum_{w\in V(G_0)}x_w
		\ge (nTr_{max}-2W)x_{min}=\rho_nx_{min}.
\end{equation}
Hence
	\begin{equation}\label{eq:12}
		Tr_{max}-\mu_{\alpha}\ge\frac{\rho_n x_{min}}{\sum\limits_{w\in V(G_0)}x_w}\ge\frac{\rho_n x_{min}}{x_{min}+(n-1)x_{max}}=\frac{\rho_n}{1+(n-1)\frac{x_{max}}{x_{min}}}.
	\end{equation}
From Claim 2, we know that $\frac{x_{max}}{x_{min}}=\frac{1}{1-\tau_n}$, then
	\begin{equation}\label{eq:13}
		Tr_{max}-\mu_{\alpha}\ge\frac{\rho_n}{1+(n-1)\frac{1}{1-\tau_n}}=\frac{\rho_n(1-\tau_n)}{n-\tau_n}.
	\end{equation}
In addition, from (\ref{eq:5}), we get
$$\rho_n(\tau_n-1)=(1-\alpha)(\tau^2_n-n\tau_n).$$
Furthermore, (\ref{eq:13}) can be written simply as
\begin{equation}\label{eq:17}
	Tr_{max}-\mu_{\alpha}\ge\frac{\rho_n(1-\tau_n)}{n-\tau_n}=(1-\alpha)\tau_n.
\end{equation}	
	
By (\ref{eq:d}), noting that $Tr_{max}-\mu_{\alpha}\le(1-\alpha)\tau_n$.

Hence

$$Tr_{max}-\mu_{\alpha}=(1-\alpha)\tau_n,$$
and all inequalities above must be equalities.
	
{\bf Claim 5.}  $Tr_{min}=n-1$ and $Tr_{max}=Tr_{min}+\rho_n=n-1+\rho_n.$

{\bf Proof.}
According to the proof of Claim 4, we know that all the inequalities in Claim 4 must be equalities.

	 From the equality in (\ref{eq:12}), it can be concluded that $x_w=x_{max}$ for $w\in V(G_0)-\{v\}$. Since $x_v=x_{min}$, from the equality in (\ref{eq:11}), we have
	$$\sum_{w\in V(G_0)}(Tr_{max}-Tr_w)x_w=\sum_{w\in V(G_0)-\{v\}}(Tr_{max}-Tr_w)x_{max}+(Tr_{max}-Tr_v)x_v=\rho_nx_{min}.$$

Next, we will proof that $Tr_w=Tr_{max}$ for ${w\in V(G_0)-\{v\}}$.

 If it does not true, we have $\sum\limits_{w\in V(G_0)-\{v\}}(Tr_{max}-Tr_w)x_{max}\ge x_{max}$. From Claim 3, we know $(Tr_{max}-Tr_v)x_v\ge x_v=x_{min}.$ Since $G_0$ is non-transmission regular, then $x_{max}>x_{min}$. Hence $$\sum_{w\in V(G_0)-\{v\}}(Tr_{max}-Tr_w)x_{max}+(Tr_{max}-Tr_v)x_v\ge x_{max}+x_{min}>2x_{min}\ge \rho_nx_{min}.$$
	This is a contradiction.
	Hence, we get that $Tr_w=Tr_{max}$ for ${w\in V(G_0)-\{v\}}$ and
	\begin{equation}\label{eq:15}
		Tr_{max}-Tr_v=Tr_{max}-Tr_{min}=\rho_n.
	\end{equation}
Furthermore
\begin{align*}
	\mu_{\alpha}{x_{min}}=\mu_{\alpha}x_v&=\alpha Tr_vx_v+(1-\alpha)\sum\limits_{w\in V(G_0)}d(v,w)x_w\\
	&=\alpha Tr_{min}{x_{min}}+(1-\alpha)Tr_{min}x_{max}.
\end{align*}
From Claim 2, $\frac{x_{max}}{x_{min}}=\frac{1}{1-\tau_n}$, then
\begin{equation}\label{eq:16}
	\mu_{\alpha}=\alpha Tr_{min}+(1-\alpha)Tr_{min}\frac{x_{max}}{x_{min}}=\alpha Tr_{min}+(1-\alpha)Tr_{min}\frac{1}{1-\tau_n}.
\end{equation}
From the equality in (\ref{eq:13}), (\ref{eq:15}) and (\ref{eq:16}), it can be concluded that
\begin{align*}
	\rho_n\frac{1-\tau_n}{n-\tau_n}&=Tr_{max}-\mu_{\alpha}\\
	&=Tr_{max}-\alpha Tr_{min}-(1-\alpha)Tr_{min}\frac{1}{1-\tau_n}\\
	&=\rho_n+(1-\alpha)Tr_{min}-(1-\alpha)Tr_{min}\frac{1}{1-\tau_n},
\end{align*}
which implies $Tr_{min}=\frac{1}{1-\alpha}\frac{n-1}{n-\tau_n}\frac{1-\tau_n}{\tau_n}\rho_n$.

Recall that $\tau_n$ satisfies the equation (\ref{eq:5}), we get $\rho_n=\frac{(1-\alpha)(\tau^2_n-n\tau_n)}{\tau_n-1}.$
Then
$$\begin{cases}
	Tr_{min}=\frac{1}{1-\alpha}\frac{n-1}{n-\tau_n}\frac{1-\tau_n}{\tau_n}\rho_n\\
	\rho_n=\frac{(1-\alpha)(\tau^2_n-n\tau_n)}{\tau_n-1}
\end{cases}.$$
By a simple calculation, we get $Tr_{min}=n-1$.

Furthermore, by (\ref{eq:15}), we have $Tr_{max}=Tr_{min}+\rho_n=n-1+\rho_n$, which completes the proof of this claim.

According to the proof process mentioned above, we continue to prove Theorem 1.1.

From Claim 5, we get $Tr_{min}=Tr_v=n-1$, which implies $d(v)=n-1$ and $diam(G_0)=2$.
For any $y\in V(G_0)$,
$$Tr_y=\sum\limits_{x\in V(G_0)}d(y,x)=d(y)+2(n-1-d(y))=2n-2-d(y).$$
Hence, for $z\in V(G_0)-\{v\}$, we have
$d(z)=2(n-1)-Tr_z=2n-2-(n-1+\rho_n)=n-1-\rho_n$,
which implies that if $n$ is odd, $d(z)=n-2$; if $n$ is even, $d(z)=n-3.$
Hence, if $n$ is odd, then $G_0\cong K_{1,2,\cdots,2}$; if $n$ is even, then $G_0$ is isomorphic to any $(n-4)$-$DVDR$ graph.

Note that $G_0$ is a graph which attains the minimum $Tr_{max}(G)-\mu_{\alpha}(G)$ among all connected non-transmission regular graph $G$ of order $n$. Hence, for any connected non-transmission regular graph $G$ of order $n$, it can be concluded that:
$$Tr_{max}(G)-\mu_{\alpha}(G)\ge Tr_{max}(G_0)-\mu_{\alpha}(G_0)=(1-\alpha)\tau_n.$$

	According to the above process, we have:

	(1) If $n$ is odd and $\alpha\in [0,1)$, then
	
	\begin{equation*}
		Tr_{max}-\mu_{\alpha}(G)\ge(1-\alpha)\tau_n(1)=\frac{(1-\alpha)n+1-\sqrt{[(1-\alpha)n+1]^2-4(1-\alpha)}}{2},
	\end{equation*}
where $\tau_n(1)=\frac{(1-\alpha)n+1-\sqrt{[(1-\alpha)n+1]^2-4(1-\alpha)}}{2(1-\alpha)}$. Equality holds if and only if $G\cong K_{1,2,\cdots,2}$.

	(2) If $n$ is even and $\alpha\in[0,1)$, then
	
	\begin{equation*}
		Tr_{max}-\mu_{\alpha}(G)\ge(1-\tau_n(2))=\tau_n(2)=\frac{(1-\alpha)n+2-\sqrt{[(1-\alpha)n+2]^2-8(1-\alpha)}}{2},
	\end{equation*}
where $\tau_n(2)=\frac{(1-\alpha)n+2-\sqrt{[(1-\alpha)n+2]^2-8(1-\alpha)}}{2(1-\alpha)}$. Equality holds if and only if $G$ is isomorphic to any $(n-4)$-$DVDR$ graph.
\end{proof}

\section*{Statement}

This article has undergone further revisions and enhancements, all of which were contributed by Ligong Wang. In recognition of his contributions, Ligong Wang is now acknowledged as a new co-author. 
The manuscript has been updated to its current version to incorporate these changes. We affirm that all authors have reviewed and approved this update.

\end{document}